\documentclass[a4paper,12pt]{article}
\usepackage{mathtools,amssymb,amsthm}
\usepackage{parskip}
\usepackage{url}
\usepackage[hidelinks]{hyperref}
\usepackage[style=authoryear]{biblatex}
\newtheorem{theorem}{Theorem}[section]

\newtheorem{lemma}[theorem]{Lemma}

\theoremstyle{definition}
\newtheorem{definition}[theorem]{Definition}
\newtheorem{example}[theorem]{Example}

\theoremstyle{remark}

\newtheorem*{problem}{Problem}

\newcommand{\fset}[2]{\mathcal F^{(#1)}_{#2}}
\newcommand{\N}{\mathbb N}
\DeclareMathOperator{\atom}{atom}
\DeclareMathOperator{\ran}{ran}

\title{Tuples as sets}
\author{Adrian Ducourtial}
\date{June 16\textsuperscript{th}, 2026}

\begin{document}

\maketitle

\section{Introduction}

Fix an infinite domain $D_0$ and recursively define $D_{n+1}$ as the set of finite non-empty subsets of $D_n$ for $n \geq 0$. For $k \geq 1$, $n \geq 0$, we define the set $\fset kn$, whose elements are functions $D_0^k \to D_n$, by recursion on $n$, as follows.
\begin{enumerate}
    \item $\fset k0$ is the set of $k$-ary projections on $D_0$, i.e., those functions $D_0^k \to D_0$ that are of the form $(x_1, \dots, x_k) \mapsto x_i$ for some $1 \leq i \leq k$.
    \item For $n \geq 0$, $\fset k{n+1}$ is the set of those functions $D_0^k \to D_{n+1}$ that are of the form $\mathbf x \mapsto \{g(\mathbf x) : g \in \mathcal G\}$ for some non-empty $\mathcal G \subseteq \fset kn$.
\end{enumerate}
\begin{example}\label{ex}
    The projections $(x_1, x_2) \mapsto x_1$ and $(x_1, x_2) \mapsto x_2$ on $D_0$ are in $\fset 20$. Therefore, the functions $(x_1, x_2) \mapsto \{x_1\}$ and $(x_1, x_2) \mapsto \{x_1, x_2\}$ of type $D_0^2 \to D_1$ are in $\fset 21$. It follows that the function $f_K : D_0^2 \to D_2 : (x_1, x_2) \to \{\{x_1\}, \{x_1, x_2\}\}$ is in $\fset 22$.
\end{example}
The function $f_K$ of Example~\ref{ex}, originally considered by \textcite[171]{Kur}, is injective. This allows us to identify the ordered pair $(x_1, x_2)$ of objects $x_1, x_2 \in D_0$ with the set $f_K(x_1, x_2) \in D_2$; for then, the injectivity of $f_K$ amounts to the characteristic property of ordered pairs (of objects in $D_0$). This identification is standard in classical set theory \parencite[291]{Kan}, where, generalizing to classes, $D_0$ is the universe of sets and $D_n \subseteq D_0$ for all $n$.

More generally, for $k \geq 1$, $n \geq 1$, an injection $f \in \fset kn$ provides a definition of $k$-tuples over $D_0$ as sets in $D_n$. In this paper, we ask the following.

\begin{problem}
    Let $k \geq 1$, $n \geq 1$ be given. Does $\fset kn$ contain an injection?
\end{problem}

The paper is organized as follows. Section~\ref{sec:gp} develops the basic theory, leading us to restate our problem as the study of a certain integer function. Sections~\ref{sec:ql},~\ref{sec:val}, and~\ref{sec:qt} then initiate this study, giving qualitative results, function values for small arguments, and quantitative results, respectively. Finally, Section~\ref{sec:var} discusses several variants of the problem.

\section{Good pairs}\label{sec:gp}

Denote by $\N$ the set of positive integers. Call a pair $(k, n) \in \N \times \N$ \emph{good} if $\fset kn$ contains an injection.

\begin{lemma}\label{lem:var}
    Let $n \geq 0$, $k, \ell \geq 1$, $f \in \fset kn$, and $\pi_{(i)} \in \fset \ell0$ for $1 \leq i \leq k$ be given. The function $f' : D_0^\ell \to D_n$ given by
    \[f'(\mathbf x) = f(\pi_{(1)}(\mathbf x), \dots, \pi_{(k)}(\mathbf x))\]
    is in $\fset \ell n$.
\end{lemma}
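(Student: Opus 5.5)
We argue by induction on $n$, holding $k$, $\ell$ and the projections $\pi_{(1)}, \dots, \pi_{(k)} \in \fset \ell0$ fixed while $f$ ranges over $\fset kn$. To keep notation short, write $\Pi : D_0^\ell \to D_0^k$ for the map $\mathbf x \mapsto (\pi_{(1)}(\mathbf x), \dots, \pi_{(k)}(\mathbf x))$. The claim is then that $f \circ \Pi \in \fset \ell n$ for every $f \in \fset kn$.

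For the base case $n = 0$, $f$ is a projection $(y_1, \dots, y_k) \mapsto y_i$ for some $1 \leq i \leq k$. Hence $f' = f \circ \Pi = \pi_{(i)}$, which lies in $\fset \ell0$ by hypothesis.

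For the inductive step, assume the claim for $n$ and let $f \in \fset k{n+1}$. By definition there is a non-empty $\mathcal G \subseteq \fset kn$ with $f(\mathbf y) = \{g(\mathbf y) : g \in \mathcal G\}$ for all $\mathbf y \in D_0^k$. Put $\mathcal G' = \{g \circ \Pi : g \in \mathcal G\}$. By the induction hypothesis, $\mathcal G' \subseteq \fset \ell n$, and $\mathcal G'$ is non-empty because $\mathcal G$ is. For every $\mathbf x \in D_0^\ell$ we then have
\[f'(\mathbf x) = f(\Pi(\mathbf x)) = \{g(\Pi(\mathbf x)) : g \in \mathcal G\} = \{h(\mathbf x) : h \in \mathcal G'\}.\]
This is exactly the form required for membership in $\fset \ell{n+1}$. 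Since $f'$ also has codomain $D_{n+1}$, we conclude $f' \in \fset \ell{n+1}$.

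There is no substantial obstacle. The only point needing care is that the induction hypothesis must be applied to each $g \in \mathcal G$ with the same $\Pi$, which is why $\Pi$ is fixed and only $f$ varies in the induction. The argument does not require $\mathcal G$ to be finite, and non-emptiness is preserved under $g \mapsto g \circ \Pi$.
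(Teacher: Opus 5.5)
Your proof is correct and follows the paper's argument essentially verbatim. It uses induction on $n$: the base case reduces $f \circ \Pi$ to the projection $\pi_{(i)}$, and the inductive step applies the hypothesis to each $g \in \mathcal G$, so that the non-empty family $\{g \circ \Pi : g \in \mathcal G\} \subseteq \fset \ell n$ witnesses $f' \in \fset \ell{n+1}$.
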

\begin{proof}
    We prove the statement by induction on $n$.
    \begin{itemize}
        \item \textit{Base case.} Since $f \in \fset k0$, we have that $f = \pi_i : (x_1, \dots, x_k) \mapsto x_i$ for some $1 \leq i \leq k$. Hence,
        \[f'(\mathbf x) = \pi_i(\pi_{(1)}(\mathbf x), \dots, \pi_{(k)}(\mathbf x)) = \pi_{(i)}(\mathbf x)\]
        for $\mathbf x \in D_0^\ell$. In other words, $f' = \pi_{(i)} \in \fset \ell0$, and we are done.
        \item \textit{Inductive step.} Let $n \geq 0$ be given. The function $f \in \fset k{n+1}$ is given by $f(\mathbf x) = \{g(\mathbf x) : g \in \mathcal G\}$ for some non-empty $\mathcal G \subseteq \fset kn$. For each $g \in \mathcal G$, the function $g' : D_0^\ell \to D_n$ given by
        \[g'(\mathbf x) = g(\pi_{(1)}(\mathbf x), \dots, \pi_{(k)}(\mathbf x))\]
        is in $\fset \ell n$ by the inductive hypothesis. Together, this entails that
        \begin{align*}
            f'(\mathbf x) &= f(\pi_{(1)}(\mathbf x), \dots, \pi_{(k)}(\mathbf x))\\
            &= \{g(\pi_{(1)}(\mathbf x), \dots, \pi_{(k)}(\mathbf x)) : g \in \mathcal G\}\\
            &= \{g'(\mathbf x) : g \in \mathcal G\}
        \end{align*}
        for $\mathbf x \in D_0^\ell$. In other words, $f'$ is determined by the set $\{g' : g \in \mathcal G\} \subseteq \fset \ell n$, and since this set is non-empty, we have $f \in \fset \ell{n+1}$ as desired.
    \end{itemize}
    This concludes the proof.
\end{proof}

\begin{definition}
    For $n \geq 0$, recursively define $\iota_n \in \fset 1n$ by letting $\iota_0 : x \mapsto x$ and $\iota_{n+1} : x \mapsto \{\iota_n(x)\}$ for all $n$.
\end{definition}

\begin{lemma}\label{lem:iota}
    For $n \geq 0$, the function $\iota_n$ is injective.
\end{lemma}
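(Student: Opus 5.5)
I will prove the statement by induction on $n$, following the recursive definition of $\iota_n$. The only fact needed beyond the definition is that two singletons are equal exactly when their elements are equal.

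\textit{Base case.} The function $\iota_0 : x \mapsto x$ is the identity on $D_0$. It is trivially injective.

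\textit{Inductive step.} Assume $\iota_n$ is injective, and let $x, y \in D_0$ with $\iota_{n+1}(x) = \iota_{n+1}(y)$. By definition this says $\{\iota_n(x)\} = \{\iota_n(y)\}$ as elements of $D_{n+1}$. By extensionality, $\iota_n(x)$ belongs to $\{\iota_n(y)\}$, so $\iota_n(x) = \iota_n(y)$. The inductive hypothesis then gives $x = y$, so $\iota_{n+1}$ is injective.

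The argument is routine and has no real obstacle. The one point worth recording is that $\iota_n$ genuinely lies in $\fset 1n$, which the definition asserts. By induction, $\iota_{n+1}$ is determined by the non-empty family $\{\iota_n\} \subseteq \fset 1n$, so it lies in $\fset 1{n+1}$. This membership is not needed for injectivity itself.
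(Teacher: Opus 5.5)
Your proof is correct and follows the paper's proof: induction on $n$, with the inductive step reducing $\{\iota_n(x)\} = \{\iota_n(y)\}$ to $\iota_n(x) = \iota_n(y)$ and then applying the inductive hypothesis. Your side remark that $\iota_n \in \fset 1n$ is accurate, though, as you note, it is not needed for injectivity.
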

\begin{proof}
    By induction on $n$.
    \begin{itemize}
        \item \textit{Base case.} The function $\iota_0$ is trivially injective.
        \item \textit{Inductive step.} For $x, y \in D_0$, we have
        \begin{align*}
            \iota_{n+1}(x) = \iota_{n+1}(y) &\iff \{\iota_n(x)\} = \{\iota_n(y)\}\\
            &\iff \iota_n(x) = \iota_n(y)\\
            &\iff x = y &&\text{by ind.\ hyp.},
        \end{align*}
        as desired.
    \end{itemize}
    This concludes the proof.
\end{proof}

\begin{definition}
    Let $\sigma$ be a permutation on $D_0$, i.e., a bijection $D_0 \to D_0$. We define the permutation $\hat\sigma_n$ on $D_n$ by recursion on $n \geq 0$. Let $\hat\sigma_0 = \sigma$, and for $n \geq 0$, let $\hat\sigma_{n+1}$ be given by $\hat\sigma_{n+1}(S) = \{\hat\sigma_n(s) : s \in S\}$.
\end{definition}

\begin{lemma}\label{lem:permcomm}
    Let $k \geq 1$, $n \geq 0$, and $f \in \fset kn$ be given, and let $\sigma : D_0 \to D_0$ be a permutation. For $\mathbf x = (x_1, \dots, x_k) \in D_0^k$, we have
    \[f(\sigma\mathbf x) = \hat\sigma_n(f(\mathbf x)),\]
    where $\sigma\mathbf x = (\sigma(x_1), \dots, \sigma(x_k))$.
\end{lemma}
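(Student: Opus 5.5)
We will prove the statement by induction on $n$, with $k$ and $\sigma$ fixed throughout, following the same pattern as the proofs of Lemma~\ref{lem:var} and Lemma~\ref{lem:iota}. The claim to be established for each $n$ is: for every $f \in \fset kn$ and every $\mathbf x \in D_0^k$, we have $f(\sigma\mathbf x) = \hat\sigma_n(f(\mathbf x))$. No earlier lemma is needed beyond the definitions of $\fset kn$ and $\hat\sigma_n$.

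\textit{Base case.} Let $f \in \fset k0$. Then $f$ is a projection $\pi_i : (x_1, \dots, x_k) \mapsto x_i$ for some $1 \leq i \leq k$. Hence
\[f(\sigma\mathbf x) = \sigma(x_i) = \hat\sigma_0(x_i) = \hat\sigma_0(f(\mathbf x)),\]
using that $\hat\sigma_0 = \sigma$ by definition.

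\textit{Inductive step.} Let $f \in \fset k{n+1}$. Then $f(\mathbf x) = \{g(\mathbf x) : g \in \mathcal G\}$ for some non-empty $\mathcal G \subseteq \fset kn$. Unfolding the definitions and applying the inductive hypothesis to each $g \in \mathcal G$ gives
\begin{align*}
    f(\sigma\mathbf x) &= \{g(\sigma\mathbf x) : g \in \mathcal G\}\\
    &= \{\hat\sigma_n(g(\mathbf x)) : g \in \mathcal G\} &&\text{by ind.\ hyp.}\\
    &= \{\hat\sigma_n(s) : s \in f(\mathbf x)\}\\
    &= \hat\sigma_{n+1}(f(\mathbf x)).
\end{align*}
The third equality is a reindexing of the set: the elements $s$ of $f(\mathbf x)$ are exactly the values $g(\mathbf x)$ for $g \in \mathcal G$, so both sets are the image of $f(\mathbf x)$ under $\hat\sigma_n$.

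There is no real obstacle here. The only point needing care is that the inductive hypothesis must quantify over all $g \in \fset kn$, not over a single function. This is guaranteed by stating the claim for all $f \in \fset kn$ at each level $n$, as above.

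We do not use here that $\hat\sigma_n$ is a permutation, since only the functional identity is claimed. For completeness, bijectivity follows by an easy parallel induction: $\widehat{(\sigma^{-1})}_n$ is an inverse of $\hat\sigma_n$.
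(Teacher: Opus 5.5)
Your proof is correct and follows the paper's argument exactly. Both use induction on $n$ with the claim quantified over all of $\fset kn$, the projection base case via $\hat\sigma_0 = \sigma$, and an inductive step that unfolds $f$ through $\mathcal G$, applies the hypothesis to each $g$, and recognizes $\hat\sigma_{n+1}(f(\mathbf x))$. Your explicit reindexing step is just what the paper compresses into ``by def.\ of $\hat\sigma_{n+1}$''.
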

\begin{proof}
    We prove the statement by induction on $n$.
    \begin{itemize}
        \item \textit{Base case.} Since $f \in \fset k0$, we have $f = \pi_i : (x_1, \dots, x_k) \mapsto x_i$ for some $1 \leq i \leq k$. Therefore,
        \[f(\sigma\mathbf x) = \sigma(x_i) = \sigma(f(\mathbf x))\]
        for $\mathbf x \in D_0^k$, and since $\sigma = \hat\sigma_0$, we are done.
        \item \textit{Inductive step.} The function $f$ is given by $f(\mathbf x) = \{g(\mathbf x) : g \in \mathcal G\}$ for some non-empty $\mathcal G \subseteq \fset kn$. Hence,
        \begin{align*}
            f(\sigma\mathbf x) &= \{g(\sigma\mathbf x) : g \in \mathcal G\}\\
            &= \{\hat\sigma_n(g(\mathbf x)) : g \in \mathcal G\} &&\text{by ind.\ hyp.}\\
            &= \hat\sigma_{n+1}(f(\mathbf x)) &&\text{by def.\ of } \hat\sigma_{n+1}
        \end{align*}
        for $\mathbf x \in D_0^k$, as desired.
    \end{itemize}
    This concludes the proof.
\end{proof}

\begin{definition}
    We define the function $\atom_n : D_n \to D_1$ by recursion on $n \geq 0$. Let $\atom_0 : x \mapsto \{x\}$ and let $\atom_{n+1} : S \mapsto \bigcup_{s\in S} \atom_n(s)$ for $n \geq 0$.
\end{definition}

\begin{lemma}\label{lem:atom}
    Let $k \geq 1$, $n \geq 0$, and $f \in \fset kn$ be given. For $\mathbf x \in D_0^k$, writing $\mathbf x = (x_1, \dots, x_k)$, we have
    \[\atom_n(f(\mathbf x)) \subseteq \{x_1, \dots, x_k\}.\]
\end{lemma}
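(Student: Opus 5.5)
We prove the statement by induction on $n$, mirroring the structure of Lemmas~\ref{lem:var} and~\ref{lem:permcomm}. The only facts we need are the recursive definition of $\fset kn$ and that of $\atom_n$.

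\emph{Base case.} If $f \in \fset k0$, then $f = \pi_i : (x_1, \dots, x_k) \mapsto x_i$ for some $1 \leq i \leq k$. Hence $\atom_0(f(\mathbf x)) = \{x_i\}$, which is contained in $\{x_1, \dots, x_k\}$.

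\emph{Inductive step.} Suppose the claim holds for $n$, and let $f \in \fset k{n+1}$. Write $f(\mathbf x) = \{g(\mathbf x) : g \in \mathcal G\}$ for some non-empty $\mathcal G \subseteq \fset kn$. By the definition of $\atom_{n+1}$,
\[
\atom_{n+1}(f(\mathbf x)) = \bigcup_{s \in f(\mathbf x)} \atom_n(s) = \bigcup_{g \in \mathcal G} \atom_n(g(\mathbf x)).
\]
The second equality holds because the elements of $f(\mathbf x)$ are exactly the values $g(\mathbf x)$ for $g \in \mathcal G$; the union is unchanged when different $g$ share a value. By the inductive hypothesis applied to each $g \in \mathcal G \subseteq \fset kn$, every term of the union is contained in $\{x_1, \dots, x_k\}$, so the union is as well.

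There is no real obstacle here. The only point needing care is the reindexing of the union from elements $s \in f(\mathbf x)$ to functions $g \in \mathcal G$ in the display above. Everything else follows directly from the definitions.
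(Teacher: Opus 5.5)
Your proof is correct and follows essentially the same route as the paper: induction on $n$, with a projection in the base case and, in the inductive step, rewriting $\atom_{n+1}(f(\mathbf x))$ as $\bigcup_{g \in \mathcal G} \atom_n(g(\mathbf x))$ before applying the inductive hypothesis to each term. You also justify reindexing the union from elements $s \in f(\mathbf x)$ to functions $g \in \mathcal G$, which the paper leaves implicit.
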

\begin{proof}
    By induction on $n$.
    \begin{itemize}
        \item \textit{Base case.} We have $f = \pi_i : (x_1, \dots, x_k) \mapsto x_i$ for some $1 \leq i \leq k$. Hence, $\atom_0(f(\mathbf x)) = \atom_0(x_i) = \{x_i\} \subseteq \{x_1, \dots, x_k\}$, as desired.
        \item \textit{Inductive step.} The function $f$ is given by $f(\mathbf x) = \{g(\mathbf x) : g \in \mathcal G\}$ for some non-empty $\mathcal G \subseteq \fset kn$. Therefore,
        \[\atom_{n+1}(f(\mathbf x)) = \bigcup_{g \in \mathcal G} \atom_n(g(\mathbf x)).\]
        By the inductive hypothesis, $\atom_n(g(\mathbf x)) \subseteq \{x_1, \dots, x_k\}$ for every $g \in \mathcal G$, whence $\atom_{n+1}(f(\mathbf x)) \subseteq \{x_1, \dots, x_k\}$ as desired.
    \end{itemize}
    This concludes the proof.
\end{proof}

\begin{lemma}\label{lem:atomiota}
    We have $\atom_n(\iota_n(x)) = \{x\}$ for all $n \geq 0$.
\end{lemma}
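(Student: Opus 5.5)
The plan is a straightforward induction on $n$, following the same pattern as Lemmas~\ref{lem:iota} and~\ref{lem:atom}. Throughout, $x \in D_0$ is fixed but arbitrary. The only ingredients are the recursive definitions of $\iota_n$ and $\atom_n$.

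\emph{Base case.} For $n = 0$, we have $\iota_0(x) = x$. The definition of $\atom_0$ then gives $\atom_0(\iota_0(x)) = \atom_0(x) = \{x\}$, as required.

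\emph{Inductive step.} Suppose $\atom_n(\iota_n(x)) = \{x\}$ for some $n \geq 0$. By definition, $\iota_{n+1}(x) = \{\iota_n(x)\}$ is a singleton. Hence the union in the definition of $\atom_{n+1}$ has exactly one term:
\[\atom_{n+1}(\iota_{n+1}(x)) = \bigcup_{s \in \{\iota_n(x)\}} \atom_n(s) = \atom_n(\iota_n(x)) = \{x\},\]
where the last equality is the inductive hypothesis.

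I do not expect any real obstacle. The only point needing care is that the union over a singleton index set collapses to its single term. Note also that Lemma~\ref{lem:atom}, applied with $k = 1$, already gives the inclusion $\atom_n(\iota_n(x)) \subseteq \{x\}$. One could therefore instead prove only non-emptiness of $\atom_n(\iota_n(x))$ by induction, but the direct computation above is simpler and gives equality outright.
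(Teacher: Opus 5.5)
Your proof is correct and matches the paper's own argument: induction on $n$, with base case $\atom_0(\iota_0(x)) = \atom_0(x) = \{x\}$, and an inductive step in which the union over the singleton $\iota_{n+1}(x) = \{\iota_n(x)\}$ reduces to $\atom_n(\iota_n(x))$. Your side remark that Lemma~\ref{lem:atom} (with $k = 1$) already gives the inclusion $\atom_n(\iota_n(x)) \subseteq \{x\}$ is accurate, but the direct computation does not need it.
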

\begin{proof}
    By induction on $n$.
    \begin{itemize}
        \item \textit{Base case.} We have $\atom_0(\iota_0(x)) = \atom_0(x) = \{x\}$ as desired.
        \item \textit{Inductive step.} We have
        \[\atom_{n+1}(\iota_{n+1}(x)) = \atom_{n+1}(\{\iota_n(x)\}) = \atom_n(\iota_n(x)),\]
        which is equal to $\{x\}$ by the inductive hypothesis.
    \end{itemize}
    This concludes the proof.
\end{proof}

\begin{theorem}\label{thm:diag}
    Let $k, n \in \N$ be given. If $(k, n)$ is good, then $(k+1, n+1)$ is good.
\end{theorem}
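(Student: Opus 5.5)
The plan is to imitate Kuratowski's pair $\{\{x_1\},\{x_1,x_2\}\}$ of Example~\ref{ex}. Given an injection $f \in \fset kn$ with $n \geq 1$, define $f' : D_0^{k+1} \to D_{n+1}$ by
\[f'(x_1,\dots,x_{k+1}) = \bigl\{\iota_n(x_1),\ \{\iota_{n-1}(x_1)\} \cup f(x_2,\dots,x_{k+1})\bigr\}.\]
For $k=n=1$ and $f=\iota_1$ this is exactly $f_K$. The proof then has two parts: membership of $f'$ in $\fset{k+1}{n+1}$, and injectivity of $f'$.

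\emph{Membership.} Write $f(\mathbf y) = \{g(\mathbf y) : g \in \mathcal G\}$ with $\mathcal G \subseteq \fset k{n-1}$ non-empty. By Lemma~\ref{lem:var}, each $g \in \mathcal G$ composed with the projections $\mathbf x \mapsto x_2, \dots, \mathbf x \mapsto x_{k+1}$ gives some $g' \in \fset{k+1}{n-1}$. Likewise $\iota_{n-1}$ and $\iota_n$ composed with $\mathbf x \mapsto x_1$ give functions in $\fset{k+1}{n-1}$ and $\fset{k+1}{n}$. Hence the second element $h(\mathbf x)$ of $f'(\mathbf x)$ is generated by $\{g' : g\in\mathcal G\} \cup \{\iota_{n-1}\circ\pi_1\}$, so $h \in \fset{k+1}n$. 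Then $f'$, generated by $\{\iota_n\circ\pi_1, h\}$, lies in $\fset{k+1}{n+1}$.

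\emph{Injectivity.} First I would prove an auxiliary fact. If $f \in \fset kn$ is injective, then $\atom_n(f(\mathbf y)) = \{y_1,\dots,y_k\}$. The argument is as follows. Suppose some $y_i$ is missing from $\atom_n(f(\mathbf y))$. Take a transposition $\sigma$ swapping $y_i$ with a fresh element of the infinite set $D_0$. An easy induction shows that $\hat\sigma_n$ fixes every set whose atoms are all fixed by $\sigma$. Then Lemma~\ref{lem:permcomm} gives $f(\sigma\mathbf y) = \hat\sigma_n(f(\mathbf y)) = f(\mathbf y)$ with $\sigma\mathbf y \neq \mathbf y$, which is a contradiction.

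Now decode $f'(\mathbf x)$ as follows.
\begin{itemize}
\item Suppose $f'(\mathbf x)$ has one element. Then $\{\iota_{n-1}(x_1)\}\cup f(\mathbf y) = \{\iota_{n-1}(x_1)\}$, so $f(\mathbf y) = \iota_n(x_1)$, where $\mathbf y = (x_2,\dots,x_{k+1})$. By Lemma~\ref{lem:atomiota} and the auxiliary fact, all coordinates then equal $x_1$.
\item Suppose $f'(\mathbf x)$ has two elements. Then $\iota_n(x_1)$ is the unique singleton among them, since the other element is a singleton only if it equals $\iota_n(x_1)$. This singleton recovers $x_1$ by Lemma~\ref{lem:iota}. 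The other element is $B = \{\iota_{n-1}(x_1)\}\cup f(\mathbf y)$.
\end{itemize}

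The main obstacle is recovering $f(\mathbf y)$ from $B$ and $x_1$, because $f(\mathbf y)$ is either $B$ or $B\setminus\{\iota_{n-1}(x_1)\}$. I would try to rule out $f(\mathbf y) = B$ and $f(\mathbf y') = B \setminus\{\iota_{n-1}(x_1)\}$ holding simultaneously for $\mathbf y \neq \mathbf y'$. The auxiliary fact gives $x_1 \in \{\mathbf y\}$, and comparing atoms gives $\{\mathbf y'\} \subseteq \{\mathbf y\}$. I would then apply a permutation argument (Lemma~\ref{lem:permcomm}) to the pair $\mathbf y, \mathbf y'$ to force a collision that contradicts injectivity of $f$.

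If that fails, the fallback is to change the construction so that the added marker cannot already lie in $f(\mathbf y)$. For instance, replace $\iota_{n-1}(x_1)$ by an element of different "shape", so that membership of the marker in $B$ is itself decodable.
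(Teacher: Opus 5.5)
Your membership argument, the auxiliary fact $\atom_n(f(\mathbf y)) = \{y_1,\dots,y_k\}$, and the recovery of $x_1$ are all fine. The step you flag as the main obstacle, however, is a genuine gap, and it cannot be closed for an arbitrary injective $f$: the configuration you hope to rule out actually occurs. Take $k = n = 3$ and
\[
f(y_1,y_2,y_3) = \bigl\{\{\{y_2\}\},\ \{\{y_3\}\},\ \{\{y_1\},\{y_1,y_2\}\},\ \{\{y_1,y_2\},\{y_1,y_2,y_3\}\}\bigr\} \in \fset 33 .
\]
This $f$ is injective, by a case split on which coordinates coincide. The coincidence pattern of $\mathbf y$ can be read off from three features: $|f(\mathbf y)|$, the number of members of the form $\{\{c\}\}$, and whether some member contains a three-element set. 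In every pattern except $y_1=y_2=y_3$, there is a unique member with exactly two elements, one of them a singleton, namely $f_K(u,v)$ with $u \neq v$. This member, together with the three-element set when all $y_i$ are distinct, recovers $\mathbf y$. Yet for $p \neq q$,
\[
f(p,q,p) = \bigl\{\{\{p\}\},\{\{q\}\},\{\{p,q\}\},\{\{p\},\{p,q\}\}\bigr\},
\]
while
\[
f(p,q,q) = \bigl\{\{\{q\}\},\{\{p,q\}\},\{\{p\},\{p,q\}\}\bigr\}.
\]
So $f(p,q,p) = f(p,q,q) \cup \{\iota_2(p)\}$, and your map gives $f'(p,p,q,p) = f'(p,p,q,q)$.

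Here $\mathbf y = (p,q,p)$ and $\mathbf y' = (p,q,q)$ have the same set of coordinates but different coincidence patterns. No permutation of $D_0$ carries one to the other, so Lemma~\ref{lem:permcomm} gives you no leverage. Your fallback, a marker of a ``different shape'', is not worked out, and it is not clear it can be. At tuples where $x_1$ equals some coordinate of $\mathbf y$, any marker built from the coordinates becomes, by Lemma~\ref{lem:var}, the value at $\mathbf y$ of a member of $\fset k{n-1}$, i.e.\ something of the same kind as the members of $f(\mathbf y)$.

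The paper avoids the issue by never placing a marker alongside the members of $f(\mathbf y)$ at the same level. It maps $\mathbf x$ to the union of $\{\{g(x_1,\dots,x_k)\} : g \in \mathcal G\}$ and $\{\{g(x_1,\dots,x_k), \iota_{n-1}(x_{k+1})\} : g \in \mathcal G\}$. Then $f(x_1,\dots,x_k)$ is recovered robustly as the union of the singleton members. The delicate point moves to the new coordinate. There the bad case forces $f(x_1,\dots,x_k) = \{\iota_{n-1}(x_{k+1}), \iota_{n-1}(y_{k+1})\}$, a set that \emph{is} fixed by the transposition $(x_{k+1}\ y_{k+1})$. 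So a permutation argument combined with Lemma~\ref{lem:atom} does close that case.
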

\begin{proof}
    Suppose some $f \in \fset kn$ is injective. We have that $f$ is given by $f(\mathbf x) = \{g(\mathbf x) : g \in \mathcal G\}$ for some non-empty $\mathcal G \subseteq \fset k{n-1}$.

    Consider the function $f' : D_0^{k+1} \to D_{n+1}$ given by
    \begin{align*}
        f'(x_1, \dots, x_{k+1})\,=\ &\big\{\left\{g(x_1, \dots, x_k)\right\} : g \in \mathcal G\big\}\ \cup\\
        &\big\{\left\{g(x_1, \dots, x_k), \iota_{n-1}(x_{k+1})\right\} : g \in \mathcal G \big\}.
    \end{align*}
    By use of Lemma~\ref{lem:var}, it can be seen that $f' \in \fset {k+1}{n+1}$. We will show that $f'$ is injective.

    Let $\mathbf x, \mathbf y \in D_0^{k+1}$ be given. Write $\mathbf x = (x_1, \dots, x_{k+1})$, $y = (y_1, \dots, y_{k+1})$, and suppose $f'(\mathbf x) = f'(\mathbf y)$. We wish to show that $\mathbf x = \mathbf y$, i.e., that $x_i = y_i$ for all $1 \leq i \leq k+1$.

    Notice that, for all $g \in \mathcal G$, we have $\{g(x_1, \dots, x_k)\} \in f'(\mathbf x)$, and this set is a singleton. We claim that the converse holds as well: If $s \in f'(\mathbf x)$ is a singleton, then $s = \{g(x_1, \dots, x_k)\}$ for some $g \in \mathcal G$. To see this, note that $s$ is either of the form $\{g(x_1, \dots, x_k)\}$ for some $g \in \mathcal G$ (in which case we are done) or of the form $\{g(x_1, \dots, x_k), \iota_{n-1}(x_{k+1})\}$ for some $g \in \mathcal G$. In the latter case, since $s$ is a singleton, we have $g(x_1, \dots, x_k) = \iota_{n-1}(x_{k+1})$, and hence $s = \{g(x_1, \dots, x_k)\}$ as desired. An analogous argument shows that the singletons in $f'(\mathbf y)$ are exactly the sets in $D_n$ that are of the form $\{g(y_1, \dots, y_k)\}$ for some $g \in \mathcal G$.

    Denoting by $S_{\mathbf x}$ (respectively, $S_{\mathbf y}$) the set of singletons in $f'(\mathbf x)$ (respectively, $f'(\mathbf y)$), we thus have
    \begin{align*}
        \bigcup S_{\mathbf x} &= f(x_1, \dots, x_k),\\
        \bigcup S_{\mathbf y} &= f(y_1, \dots, y_k).
    \end{align*}
    Since $f'(\mathbf x) = f'(\mathbf y)$, we have $S_{\mathbf x} = S_{\mathbf y}$ by definition. Therefore, $\bigcup S_{\mathbf x} = \bigcup S_{\mathbf y}$, i.e., $f(x_1, \dots, x_k) = f(y_1, \dots, y_k)$; and since $f$ is injective, this means that $x_i = y_i$ for $1 \leq i \leq k$. It remains to show that $x_{k+1} = y_{k+1}$.

    Writing $S = S_{\mathbf x} = S_{\mathbf y}$, define
    \begin{align*}
        T_{\mathbf x} &= f'(\mathbf x) \setminus S,\\
        T_{\mathbf y} &= f'(\mathbf y) \setminus S.
    \end{align*}
    Note that the elements of $T_{\mathbf x}$ are exactly those sets in $D_n$ that are of the form $\{g(x_1, \dots, x_k), \iota_{n-1}(x_{k+1})\}$ for some $g \in \mathcal G$ with $g(x_1, \dots, x_k) \neq \iota_{n-1}(x_{k+1})$; similarly, \emph{mutatis mutandis}, for $T_{\mathbf y}$. Moreover, since $f'(\mathbf x) = f'(\mathbf y)$, we have $T_{\mathbf x} = T_{\mathbf y}$. We now split into cases.
    \begin{itemize}
        \item Suppose $T_{\mathbf x} = T_{\mathbf y}$ is empty. This means that for every $g \in \mathcal G$, it is the case that $g(x_1, \dots, x_k) = \iota_{n-1}(x_{k+1})$. Therefore, $f(x_1, \dots, x_k) = \{\iota_{n-1}(x_{k+1})\} = \iota_n(x_{k+1})$. A similar argument shows $f(y_1, \dots, y_k) = \iota_n(y_{k+1})$. Since $f(x_1, \dots, x_k) = f(y_1, \dots, y_k)$, it follows that $\iota_n(x_{k+1}) = \iota_n(y_{k+1})$; hence, by Lemma~\ref{lem:iota}, we have $x_{k+1} = y_{k+1}$ as desired.
        \item Otherwise, let $t \in T_{\mathbf x} = T_{\mathbf y}$ be given. We seek to derive a contradiction. Notice that every member of $T_{\mathbf x}$ contains $\iota_{n-1}(x_{k+1})$; likewise, every member of $T_{\mathbf y}$ contains $\iota_{n-1}(y_{k+1})$. Therefore, in particular, $\iota_{n-1}(x_{k+1}), \iota_{n-1}(y_{k+1}) \in t$. Since $t \notin S$, it must be the case that $t$ has exactly two elements; therefore, $t = \{\iota_{n-1}(x_{k+1}), \iota_{n-1}(y_{k+1})\}$ with $\iota_{n-1}(x_{k+1}) \neq \iota_{n-1}(y_{k+1})$. Notice, however, that $t$ was chosen arbitrarily; and so
        \[T_{\mathbf x} = T_{\mathbf y} = \big\{\left\{\iota_{n-1}(x_{k+1}), \iota_{n-1}(y_{k+1})\right\}\big\}.\]
        This implies, on the one hand, that $\iota_{n-1}(y_{k+1})$ is the unique member of $f(x_1, \dots, x_k)$ that is distinct from $\iota_{n-1}(x_{k+1})$, and, on the other, that $\iota_{n-1}(x_{k+1})$ is the unique member of $f(y_1, \dots, y_k)$ that is distinct from $\iota_{n-1}(y_{k+1})$. As previously established, $f(x_1, \dots, x_k) = f(y_1, \dots, y_k)$, and so together, we have
        \[f(x_1, \dots, x_k) = f(y_1, \dots, y_k) = \{\iota_{n-1}(x_{k+1}), \iota_{n-1}(y_{k+1})\}.\]
        Now, let $\sigma : D_0 \to D_0$ be the permutation $(x_{k+1}\ y_{k+1})$. We have
        \begin{align*}
            f(\sigma(x_1), \dots, \sigma(x_k)) &= \hat\sigma_n(f(x_1, \dots, x_k)) &&\text{by Lemma~\ref{lem:permcomm}}\\
            &= \hat\sigma_n\big(\{\iota_{n-1}(x_{k+1}), \iota_{n-1}(y_{k+1})\}\big)\\
            &= \{\hat\sigma_{n-1}(\iota_{n-1}(x_{k+1})), \hat\sigma_{n-1}(\iota_{n-1}(y_{k+1}))\} &&\text{by def.\ of }\hat\sigma_n\\
            &= \{\iota_{n-1}(\sigma(x_{k+1})), \iota_{n-1}(\sigma(y_{k+1}))\} &&\text{by Lemma~\ref{lem:permcomm}}\\
            &= \{\iota_{n-1}(y_{k+1}), \iota_{n-1}(x_{k+1})\}\\
            &= f(x_1, \dots, x_k).
        \end{align*}
        Since $f$ is injective, this means that $\sigma(x_i) = x_i$ for all $1 \leq i \leq k$.

        Now, because
        \begin{align*}
            \atom_n(f(x_1, \dots, x_k)) &= \atom_n\!\big(\{\iota_{n-1}(x_{k+1}), \iota_{n-1}(y_{k+1})\}\big)\\
            &\supseteq \atom_{n-1}(\iota_{n-1}(x_{k+1}))\\
            &= \{x_{k+1}\} &&\text{by Lemma~\ref{lem:atomiota}},
        \end{align*}
        it follows by Lemma~\ref{lem:atom} that $x_{k+1} \in \{x_1, \dots, x_k\}$, i.e., $x_{k+1} = x_j$ for some $1 \leq j \leq k$; and since $x_j = \sigma(x_j)$, this means $x_{k+1} = \sigma(x_{k+1}) = y_{k+1}$. However, we established earlier that $\iota_{n-1}(x_{k+1}) \neq \iota_{n-1}(y_{k+1})$, implying that $x_{k+1} \neq y_{k+1}$, a contradiction.
    \end{itemize}
    This concludes the proof.
\end{proof}

\begin{theorem}\label{thm:left}
    Let $k, n \in \N$ be given. If $(k+1, n)$ is good, then $(k, n)$ is good.
\end{theorem}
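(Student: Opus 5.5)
The plan is to obtain an injection in $\fset kn$ from an injection $f \in \fset{k+1}n$ by identifying two of the variables, using Lemma~\ref{lem:var}. Define $f' : D_0^k \to D_n$ by
\[f'(x_1, \dots, x_k) = f(x_1, \dots, x_k, x_k),\]
that is, take $\pi_{(i)} : (x_1,\dots,x_k) \mapsto x_i$ for $1 \leq i \leq k$ and $\pi_{(k+1)} : (x_1,\dots,x_k) \mapsto x_k$. All of these are in $\fset k0$. Lemma~\ref{lem:var} then gives $f' \in \fset kn$ directly.

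Injectivity is immediate. Suppose $f'(\mathbf x) = f'(\mathbf y)$. Then $f(x_1,\dots,x_k,x_k) = f(y_1,\dots,y_k,y_k)$, and injectivity of $f$ gives $(x_1,\dots,x_k,x_k) = (y_1,\dots,y_k,y_k)$. In particular $x_i = y_i$ for all $1 \leq i \leq k$. The general principle is that restricting an injection along the injective diagonal map $D_0^k \to D_0^{k+1}$ remains injective.

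I expect no real obstacle. The only point to check is that the composed function lies in the right class, which is exactly what Lemma~\ref{lem:var} provides for the choice of projections above. The hypothesis $k \geq 1$ is needed so that there is a coordinate $x_k$ to duplicate, and it is guaranteed by $k \in \N$.
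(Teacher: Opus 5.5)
Your proposal is correct and matches the paper's proof: both set $f'(x_1, \dots, x_k) = f(x_1, \dots, x_k, x_k)$, use Lemma~\ref{lem:var} to get $f' \in \fset kn$, and observe that injectivity of $f'$ follows from injectivity of $f$.
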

\begin{proof}
    Suppose some $f \in \fset {k+1}n$ is injective. Then the function $f' : D_0^k \to D_n$ given by
    \[f'(x_1, \dots, x_k) = f(x_1, \dots, x_k, x_k)\]
    is in $\fset kn$ by Lemma~\ref{lem:var} and is clearly injective.
\end{proof}

\begin{theorem}\label{thm:up}
    Let $k, n \in \N$ be given. If $(k, n)$ is good, then $(k, n+1)$ is good.
\end{theorem}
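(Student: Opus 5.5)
Given an injective $f \in \fset kn$, I will take the obvious lift $f' : D_0^k \to D_{n+1}$ defined by $f'(\mathbf x) = \{f(\mathbf x)\}$. This is of the form required by clause 2 of the definition, with $\mathcal G = \{f\}$, which is a non-empty subset of $\fset kn$. Hence $f' \in \fset k{n+1}$.

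For injectivity, suppose $f'(\mathbf x) = f'(\mathbf y)$. Then $\{f(\mathbf x)\} = \{f(\mathbf y)\}$, so $f(\mathbf x) = f(\mathbf y)$, and the injectivity of $f$ gives $\mathbf x = \mathbf y$. This is the same computation as in the inductive step of Lemma~\ref{lem:iota}. In other words, $f'$ is the composite of $f$ with the singleton map $D_n \to D_{n+1}$, and both of these are injective.

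There is no real obstacle here. The only thing to check is that the construction stays inside the class, namely that $\{f\}$ is a legitimate non-empty $\mathcal G \subseteq \fset kn$, and it is. The proof does not need Lemma~\ref{lem:var} or the permutation and atom machinery used in Theorem~\ref{thm:diag}.
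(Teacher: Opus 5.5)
Your proposal is correct and is essentially the paper's second proof: it lifts $f$ to $\mathbf x \mapsto \{f(\mathbf x)\}$ with $\mathcal G = \{f\}$ and notes that forming singletons is injective. The paper also gives a first proof via Theorems~\ref{thm:diag} and~\ref{thm:left}, but your direct argument is the simpler of the two.
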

\begin{proof}[First proof]
    If $(k, n)$ is good, then $(k+1, n+1)$ is good by Theorem~\ref{thm:diag}, whence $(k, n+1)$ is good by Theorem~\ref{thm:left}.
\end{proof}
\begin{proof}[Second proof]
    Suppose $f \in \fset kn$ is injective. Then the function $f' \in \fset k{n+1}$ given by $f'(\mathbf x) = \{f(\mathbf x)\}$ is easily seen to be injective.
\end{proof}

\begin{lemma}\label{lem:11}
    The pair $(1, 1)$ is good.
\end{lemma}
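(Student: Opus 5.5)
The plan is to exhibit an explicit injection in $\fset 11$. The natural candidate is $\iota_1 : x \mapsto \{x\}$, which was already introduced and shown to lie in $\fset 11$. Indeed, $\iota_1$ arises from the non-empty set $\mathcal G = \{\iota_0\} \subseteq \fset 10$, because $\iota_0 : x \mapsto x$ is the unique unary projection on $D_0$.

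First I will verify membership. The identity $\iota_0$ is the projection $x \mapsto x_1$ with $k = 1$, so $\iota_0 \in \fset 10$. Then $\iota_1(x) = \{g(x) : g \in \{\iota_0\}\}$, and by clause~2 of the definition this gives $\iota_1 \in \fset 11$.

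Second, I will verify injectivity. This is exactly Lemma~\ref{lem:iota} with $n = 1$; directly, $\{x\} = \{y\}$ implies $x = y$.

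Hence $\fset 11$ contains an injection, so $(1,1)$ is good. There is no real obstacle here: the only point needing care is confirming that $\iota_1$ genuinely belongs to $\fset 11$ as defined, rather than just being a function $D_0 \to D_1$, and the first step settles that.
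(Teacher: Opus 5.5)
Your proposal is correct and is the same as the paper's proof: the paper exhibits $f(x_1) = \{x_1\}$, which is exactly your $\iota_1$, as an injection in $\fset 11$. Your explicit checks, membership via $\mathcal G = \{\iota_0\}$ and injectivity via Lemma~\ref{lem:iota}, just spell out what the paper leaves implicit.
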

\begin{proof}
    The function $f \in \fset 11$ given by $f(x_1) = \{x_1\}$ is injective.
\end{proof}

\begin{theorem}\label{thm:ex}
    For every $k \in \N$, there is $n \in \N$ such that $(k, n)$ is good. 
\end{theorem}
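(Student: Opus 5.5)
The plan is a straightforward induction on $k$, with Lemma~\ref{lem:11} as the base case and Theorem~\ref{thm:diag} as the inductive step. The claim to prove by induction is: for every $k \in \N$, the pair $(k, k)$ is good. This gives the theorem with the explicit witness $n = k$.

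\emph{Base case.} For $k = 1$, Lemma~\ref{lem:11} states that $(1,1)$ is good, so there is nothing to prove.

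\emph{Inductive step.} Assume $(k,k)$ is good for some $k \in \N$. Theorem~\ref{thm:diag} applies to any good pair in $\N \times \N$, so it gives that $(k+1, k+1)$ is good. This closes the induction. Hence for every $k$ we may take $n = k$, and $(k, n)$ is good.

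There is no real obstacle. The only point needing care is that Theorem~\ref{thm:diag} is stated for $n \geq 1$, since its proof uses $\mathcal G \subseteq \fset k{n-1}$. The induction starts at $(1,1)$, so this hypothesis always holds and no appeal to $\fset k0$ as a ``good'' level is ever needed.

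If a larger $n$ is wanted, Theorem~\ref{thm:up} shows that $(k, n)$ is good for every $n \geq k$. The statement only asks for existence, so this is not required.
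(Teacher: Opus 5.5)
Your proposal is correct and follows the paper's own proof: induction on $k$, with Lemma~\ref{lem:11} as the base case and Theorem~\ref{thm:diag} as the inductive step. The only addition is that you make the witness $n = k$ explicit, which the paper's argument also yields implicitly.
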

\begin{proof}
    By induction on $k$. In the base case, apply Lemma~\ref{lem:11}; for the inductive step, apply Theorem~\ref{thm:diag}.
\end{proof}

We are led to consider the following.

\begin{definition}
    The function $\mu : \N \to \N$ sends $k$ to the least $n$ such that $(k, n)$ is good, for all $k$.
\end{definition}

By Theorem~\ref{thm:ex}, $\mu(k)$ is well-defined for all $k \in \N$; and by Theorem~\ref{thm:up}, $(k, n)$ is good just in case $n \geq \mu(k)$ for all $k, n \in \N$. Thus, the course-of-values of $\mu$ provides complete information as to which pairs $(k, n)$ are good. Until Section~\ref{sec:var}, we will henceforth devote our attention to the study of $\mu$.

\section{Qualitative results}\label{sec:ql}

In this section, we establish a couple of qualitative results concerning $\mu$.

\begin{theorem}\label{thm:mon}
    The function $\mu$ is nondecreasing.
\end{theorem}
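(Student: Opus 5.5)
To show that $\mu$ is nondecreasing, it suffices to prove $\mu(k) \leq \mu(k+1)$ for every $k \in \N$. The plan is to reduce this directly to Theorem~\ref{thm:left}, combined with the characterization following the definition of $\mu$: $(k, n)$ is good exactly when $n \geq \mu(k)$.

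Fix $k \in \N$ and set $n = \mu(k+1)$. By definition of $\mu$ (well-defined by Theorem~\ref{thm:ex}), the pair $(k+1, n)$ is good. Theorem~\ref{thm:left} then says that $(k, n)$ is good. Since $\mu(k)$ is the least $n'$ with $(k, n')$ good, we conclude $\mu(k) \leq n = \mu(k+1)$.

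Finally, an induction on $m - k$ extends the one-step inequality to $\mu(k) \leq \mu(m)$ whenever $k \leq m$, which is the claim. I do not expect any real obstacle. All the content sits in Theorem~\ref{thm:left}, whose proof identifies the last two coordinates via Lemma~\ref{lem:var}, and we take that theorem as established. The only point needing care is to use the minimality in the definition of $\mu$ correctly.
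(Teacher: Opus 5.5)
Your proof is correct and is essentially the paper's. The paper likewise combines Theorem~\ref{thm:left} with the minimality in the definition of $\mu$. It phrases this as a contradiction for arbitrary $k < \ell$, tacitly iterating Theorem~\ref{thm:left}, whereas you prove the one-step inequality $\mu(k) \leq \mu(k+1)$ and then make that iteration explicit by induction.
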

\begin{proof}
    Suppose, aiming for a contradiction, that $\mu(k) > \mu(\ell)$ for some $k < \ell$. By Theorem~\ref{thm:left}, it follows that $(k, \mu(\ell))$ is good, whence $\mu(\ell) \leq \mu(k)$, a contradiction.
\end{proof}

\begin{theorem}\label{thm:unb}
    The function $\mu$ is unbounded.
\end{theorem}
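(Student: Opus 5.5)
The plan is a pigeonhole argument. We restrict attention to tuples built from only two distinct atoms, and use Lemma~\ref{lem:atom} to confine their images to a finite part of $D_n$ whose size depends on $n$ but not on $k$.

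Fix $n \in \N$; we will show that $(k, n)$ is not good once $k$ is large enough in terms of $n$. By Theorem~\ref{thm:up} (equivalently, the remark following the definition of $\mu$), this gives $\mu(k) > n$ for all such $k$, and since $n$ is arbitrary, $\mu$ is unbounded.

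First, since $D_0$ is infinite, choose two distinct elements $a, b \in D_0$. For $m \geq 0$, let $E_m \subseteq D_m$ be the set of those $s \in D_m$ with $\atom_m(s) \subseteq \{a, b\}$. An induction on $m$ shows that $E_0 = \{a, b\}$ and that every element of $E_{m+1}$ is a finite non-empty subset of $E_m$. For the latter, note that if $S \in D_{m+1}$ and $\atom_{m+1}(S) = \bigcup_{s \in S}\atom_m(s) \subseteq \{a, b\}$, then each $s \in S$ lies in $E_m$. Hence $|E_0| = 2$ and $|E_{m+1}| \leq 2^{|E_m|}$, so $|E_n| \leq t(n)$, where $t(0) = 2$ and $t(m+1) = 2^{t(m)}$. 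The key point is that this bound is independent of $k$.

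Now let $k$ satisfy $2^k > t(n)$, and let $f \in \fset kn$ be arbitrary. Consider the $2^k$ distinct tuples in $\{a, b\}^k \subseteq D_0^k$. By Lemma~\ref{lem:atom}, each such $\mathbf x$ satisfies $\atom_n(f(\mathbf x)) \subseteq \{a, b\}$, so $f(\mathbf x) \in E_n$. Since $2^k > t(n) \geq |E_n|$, two distinct tuples must share an image, and therefore $f$ is not injective. Thus $(k, n)$ is not good, and $\mu(k) > n$. By Theorem~\ref{thm:mon}, this in fact holds for all sufficiently large $k$.

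The only step requiring care is the finiteness bound on $E_n$. One must check that membership in $E_{m+1}$ forces every element to lie in $E_m$, which follows directly from the recursive definition of $\atom$. A rigidity approach via Lemma~\ref{lem:permcomm} on tuples with distinct coordinates would fail, because rigid hypergraphs in $D_2$ exist on arbitrarily many atoms. The two-value restriction is what makes a uniform, $k$-independent bound available.
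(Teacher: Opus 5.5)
Your proof is correct and uses the same argument as the paper: restrict to finitely many atoms, use Lemma~\ref{lem:atom} to confine the images of those tuples to a finite level of the sub-hierarchy whose size does not depend on $k$, and apply pigeonhole. The differences are cosmetic. You fix $n$ and directly exhibit $k$ with $\mu(k) > n$, whereas the paper compares $\mu(\ell)$ with $\mu(k)$ via the strict growth of $|D_n'|$. You also explicitly verify the inclusion $f(\mathbf x) \in E_n$, which the paper leaves implicit.
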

\begin{proof}
    Recall that $D_0$ is infinite. Let $h \geq 2$, $a_1, \dots, a_h \in D_0$ be given with $a_i \neq a_j$ for $i \neq j$. Define $D_0' = \{a_1, \dots, a_h\}$, and for $n \geq 0$, recursively define $D_{n+1}'$ to be the set of non-empty subsets of $D_n'$. Note that $D_n' \subseteq D_n$ and $D_n'$ is finite for all $n$; moreover, the value $|D_n'|$ is strictly increasing in $n$.

    Now, let $k \in \N$ be given. We wish to show that $\mu(\ell) > \mu(k)$ for some $\ell > k$. Writing $n = \mu(k)$, let $f \in \fset kn$ be injective. Consider the restriction $f'$ of $f$ to $(D_0')^k$. By Lemma~\ref{lem:atom}, we have $\atom_n f'(\mathbf x) \subseteq D_0'$ for $\mathbf x \in (D_0')^k$, whence $f'(\mathbf x) \in D_n'$ for all $\mathbf x$. Since $f$ is injective, so is $f'$, implying that $|D_n'| \geq |(D_0')^k|$. But since $|D_0| = h \geq 2$, we may choose $\ell > k$ so that $|(D_0')^\ell| > |D_n'|$. Write $m = \mu(\ell)$, let $g \in \fset \ell m$ be injective, and consider the restriction $g'$ of $g$ to $(D_0')^\ell$. Analogously to $f'$, we have $\ran(g') \subseteq D_m'$, whence $|D_m'| \geq |(D_0')^\ell|$ by injectivity of $g'$. Together, this gives $|D'_m| > |D'_n|$, implying that $m > n$, i.e., $\mu(\ell) > \mu(k)$, as desired.
\end{proof}

\section{Determination of values}\label{sec:val}

In this section, we determine $\mu(k)$ exactly for $k \leq 5$. This will involve the use of three programs: one for enumerating the members of $\fset kn$, one for deciding whether some $f \in \fset kn$ is injective, and one for proposing possible injections $f \in \fset kn$, all for given $k, n \in \N$. The programs, as well as the theory behind their design, are available in a repository.\footnote{See \url{https://codeberg.org/ducourtial/tuples}.\label{fn}}

\begin{theorem}
    The following hold.
    \begin{itemize}
        \item[\textnormal{(i)}] $\mu(1) = 1$.
        \item[\textnormal{(ii)}] $\mu(2) = 2$.
        \item[\textnormal{(iii)}] $\mu(3) = \mu(4) = \mu(5) = 3$.
    \end{itemize}
\end{theorem}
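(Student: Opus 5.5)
Each equality splits into a lower bound (the pair $(k,n-1)$ is not good) and an upper bound (exhibit an injection in $\fset kn$). Since $\mu$ is nondecreasing by Theorem~\ref{thm:mon}, it suffices to show five things: $\mu(1)=1$, which is Lemma~\ref{lem:11} together with $\mu(1)\geq 1$ by definition; $\mu(2)\geq 2$; $\mu(2)\leq 2$, witnessed by Kuratowski's $f_K$ of Example~\ref{ex}; $\mu(3)\geq 3$; and $\mu(5)\leq 3$. From $3\leq\mu(3)\leq\mu(4)\leq\mu(5)\leq 3$, item (iii) then follows.

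For $\mu(2)\geq 2$, I will show that no $f\in\fset 21$ is injective. Every such $f$ has the form $\mathbf x\mapsto\{x_i : i\in I\}$ with $\emptyset\neq I\subseteq\{1,2\}$. Each of these fails on an easy witness: $\{x_1\}$ and $\{x_2\}$ ignore a coordinate, and $\{x_1,x_2\}$ is symmetric, so $f(a,b)=f(b,a)$.

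For $\mu(3)\geq 3$, I need that no $f\in\fset 32$ is injective. The conceptual reduction is Lemma~\ref{lem:permcomm}. Take distinct $a,b,c$ and the input $(a,b,c)$. If $f$ is injective, $f(a,b,c)$ must not be fixed by $\hat\sigma_2$ for any nontrivial permutation $\sigma$ of $\{a,b,c\}$; otherwise $f(\sigma a,\sigma b,\sigma c)=f(a,b,c)$ with a different argument. Elements of $\fset 32$ are determined by a nonempty family of nonempty subsets of $\{1,2,3\}$, i.e.\ a nonempty subset of the 7 nonempty subsets. So there are only $2^7-1=127$ candidates, and injectivity must also be checked on inputs with repeated entries.

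I would argue by hand that no family $\mathcal S\subseteq\mathcal P(\{1,2,3\})\setminus\{\emptyset\}$ has trivial stabilizer under $S_3$ and also separates non-distinct inputs. The distinct-input condition alone can be met, for example by $\{\{1\},\{1,2\}\}$, which is stabilized only by the identity. So the contradiction must come from collisions on inputs with repeated entries: an input like $(a,a,b)$ collapses the family, and collapsed images of different patterns must coincide. This case analysis is the main obstacle. I would organize it by the multiset of sizes of the members of $\mathcal S$. If the analysis becomes unwieldy, I would fall back on the exhaustive enumeration by the programs cited in footnote~\ref{fn}, over all 127 functions and the finitely many equality patterns of inputs (partitions of $\{1,2,3\}$, five of them), which suffices by Lemma~\ref{lem:permcomm}.

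For $\mu(5)\leq 3$, I need an explicit injective $f\in\fset 53$. That is, I need a family of families of subsets of $\{1,\dots,5\}$ whose image under every map $\{1,\dots,5\}\to D_0$ (again enough to check up to the 52 set partitions, by Lemma~\ref{lem:permcomm}) determines the map. I would try a generalization of the construction in Theorem~\ref{thm:diag}: build the pair-level $f_K(x_1,x_2)$ structure, then add chains $\{1\}\subset\{1,2\}\subset\{1,2,3\}\subset\cdots$ marked by distinct sizes, so that the order of the coordinates can be recovered. I would find a concrete witness with the proposal program and verify injectivity with the checker.
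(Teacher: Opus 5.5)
Your reduction is the same as the paper's. Items (i) and (ii) are argued identically, and Theorem~\ref{thm:mon} reduces (iii) to $\mu(3)>2$ and $\mu(5)\le 3$.

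The genuine gap is the upper bound $\mu(5)\leq 3$. You never exhibit an injective $f\in\fset 53$, and that witness is the entire content of this half of (iii). Your heuristic does not produce one:
\begin{itemize}
\item Iterating Theorem~\ref{thm:diag} from $f_K$ raises the level by one per added coordinate. It only gives $\mu(3)\le 3$, $\mu(4)\le 4$ and $\mu(5)\le 5$.
\item Kuratowski-style chains $\{x_1\},\{x_1,x_2\},\{x_1,x_2,x_3\},\dots$ recover the order only for tuples with pairwise distinct entries (cf.\ the ``Total orders'' paragraph of Section~\ref{sec:var}). For example, $(a,a,b,c,d)$ and $(a,b,b,c,d)$ yield the same chain. ``Marking by sizes'' breaks down exactly because repeated entries collapse the sizes.
\end{itemize}
Inputs with repetitions are where all the difficulty lies, and your sketch does not handle them. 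As written, $\mu(4)\le 3$ and $\mu(5)\le 3$ remain unproved. The paper closes this by writing down a specific member of $\fset 53$ (a set of four families of subsets of $\{x_1,\dots,x_5\}$) and certifying its injectivity by computer. Saying you would find such a witness with the search program is a plan for obtaining the proof, not the proof.

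For the lower bound you can do better than both your planned stabilizer case analysis and the paper's exhaustive search. The counting argument from the proof of Theorem~\ref{thm:unb} already settles it. Restrict any $f\in\fset 32$ to $\{a,b\}^3$ with $a\neq b$. By Lemma~\ref{lem:atom}, each value $f(\mathbf x)$ is a non-empty subset of $\{\{a\},\{b\},\{a,b\}\}$. There are only $7<8=|\{a,b\}^3|$ such subsets, so $f$ is not injective. The same count, $3<4$, gives $\mu(2)>1$.

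Two smaller inaccuracies:
\begin{itemize}
\item $\{\{1\},\{1,2\}\}$ has trivial stabilizer, but it is not injective even on pairwise distinct inputs, since it ignores $x_3$.
\item A collision involves two inputs, so Lemma~\ref{lem:permcomm} alone does not reduce the check to the five partitions of $\{1,2,3\}$. You also need Lemma~\ref{lem:atom}, for families mentioning every coordinate, to see that colliding inputs have the same set of entries.
\end{itemize}
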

\begin{proof}
    (i)\enspace It suffices to show $1 \leq \mu(1) \leq 1$. The first inequality is trivial, while the second follows from Lemma~\ref{lem:11}.

    (ii)\enspace It is easy to show that no function in $\fset 21$ is injective, whence $\mu(2) > 1$; on the other hand, $f_K \in \fset 22$ from Example~\ref{ex} is injective, so $\mu(2) \leq 2$.

    (iii)\enspace It suffices to show that $\mu(k) > 2$ for $k \geq 3$ while $\mu(k) \leq 3$ for $k \leq 5$, which, by Theorem~\ref{thm:mon}, amounts to showing that $\mu(3) > 2$ while $\mu(5) \leq 3$. For the former, an exhaustive search of $\fset 32$ yielded no injections, indicating $\mu(3) > 2$ as desired. For the latter, the function $f \in \fset 53$ given by
    \begin{align*}
        f(x_1, \dots, x_5) = \Big\{&\{\{x_1, x_2\}, \{x_1, x_2, x_4, x_5\}, \{x_1, x_3\}, \{x_1, x_4, x_5\}, \{x_5\}\},\\
        &\{\{x_1, x_4\}, \{x_3\}\},\phantom{\Big\}}\\
        &\{\{x_2, x_3\}, \{x_2, x_3, x_4\}\},\phantom{\Big\}}\\
        &\{\{x_4\}\}\Big\}
    \end{align*}
    has been verified to be injective, so that $\mu(5) \leq 3$ as desired.
\end{proof}

\section{Quantitative results}\label{sec:qt}

In this section, we mention some quantitative properties of the function $\mu$.

\begin{lemma}
    We have $\mu(k+1) \leq \mu(k)+1$ for all $k \in \N$.
\end{lemma}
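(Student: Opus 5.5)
This is a direct consequence of Theorem~\ref{thm:diag} together with the definition of $\mu$. Fix $k \in \N$ and write $n = \mu(k)$. Since $\mu(k)$ is well defined by Theorem~\ref{thm:ex}, the pair $(k, n)$ is good by the definition of $\mu$.

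We then invoke Theorem~\ref{thm:diag} with this pair, which gives that $(k+1, n+1)$ is good. Because $\mu(k+1)$ is the least $m$ such that $(k+1, m)$ is good, it follows that $\mu(k+1) \leq n+1 = \mu(k)+1$.

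There is essentially no obstacle here. The only point to check is that the hypothesis of Theorem~\ref{thm:diag} is met, namely that $n \in \N$ (i.e.\ $n \geq 1$), which holds because $\mu$ takes values in $\N$. The substantive work, building an injection in $\fset{k+1}{n+1}$ from one in $\fset kn$, was already done in Theorem~\ref{thm:diag}.
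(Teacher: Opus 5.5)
Your proof is correct and is essentially the paper's argument: $(k,\mu(k))$ is good, Theorem~\ref{thm:diag} then makes $(k+1,\mu(k)+1)$ good, and minimality of $\mu(k+1)$ gives the bound. Your explicit check that $\mu(k)\geq 1$, so that Theorem~\ref{thm:diag} applies, is a detail the paper leaves implicit.
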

\begin{proof}
    Since $(k, \mu(k))$ is good, we have that $(k+1, \mu(k)+1)$ is good by Theorem~\ref{thm:diag}, whence $\mu(k+1) \leq \mu(k)+1$ as desired.
\end{proof}

Simple bounds on $\mu$ can be given. On the one hand, the proof of Theorem~\ref{thm:unb} can be adapted to give a sublogarithmic lower bound; on the other, one can combine injections of low arity to obtain a logarithmic upper bound. Beyond this, the growth rate of $\mu$ remains to be determined.

\section{Variations on the problem}\label{sec:var}

In this section, we briefly discuss some variants of our problem. The variations may be combined; here, we consider them individually.

\textbf{The empty set.} Our definition of the $D_n$ for $n \geq 1$ excludes the empty set as a member. One may consider including it for every $n \geq 1$ and admitting functions that return it. This would make available the binary function $f_W$ given by
\[f_W(x_1, x_2) = \{\{\emptyset, \{x_1\}\}, \{\{x_2\}\}\},\]
shown to be injective by \textcite{Wie} prior to the work of Kuratowski \parencite[see][290]{Kan}. Here, the empty set serves to ``tag'' the argument $x_1$; more generally, one may study how the empty set, and derived sets such as $\{\emptyset\}$, allow for new injective functions.

\textbf{Cardinality.} Conversely, one can consider restricting $D_{n+1}$ to those non-empty subsets of $D_n$ that are of cardinality at most $c \geq 1$, for $n \geq 0$, and restricting the function classes accordingly. One may ask how the choice of $c$ affects the values of the function $\mu$, and study the special case $c = 2$.

\textbf{Atoms.} We have taken $D_0$ to be an infinite set. One may instead take $|D_0| = c$ for some fixed $c \in \N$ and study whether this decreases $\mu(k)$ for $k > c$ compared to the infinite case.

\textbf{Total orders.} Rather than having the functions in $\fset kn$ be defined on general $k$-tuples over $D_0$, we may restrict them to those tuples whose components are pairwise distinct, for $k \geq 1$, $n \geq 0$. The latter correspond, in effect, to totally ordered $n$-element subsets of $D_0$. In fact, in his \parencite*{Kur}, Kuratowski proposed a set-theoretic definition of ordering relations, which we can adapt to our context in the following manner. For $k \geq 1$, denote by $(D_0^k)^*$ the set of $k$-tuples over $D_0$ having pairwise distinct components. Then, the function $f : (D_0^k)^* \to D_2$ given by
\[f(x_1, \dots, x_k) = \{\{x_1\}, \{x_1, x_2\}, \dots, \{x_1, \dots, x_k\}\}\]
is injective for all $k$. Notice that the codomain is independent of $k$, and that for $k = 2$, we recover (in restricted form) the function $f_K$. As a historical note, although Kuratowski suggested to define ordered pairs as sets by means of $f_K$, he did not establish the injectivity of this function on general pairs, but only on pairs whose two coordinates are distinct. The former does not directly follow from the latter, however.

\textbf{Multisets.} Instead of having the members of $D_{n+1}$ be sets, we may take them to be multisets, i.e., sets to which their elements may belong ``with multiplicity'', for $n \geq 0$. Denote by $\widetilde D_1$ the set of non-empty finite multisets of objects in $D_0$. Then, for $k \geq 1$, the function $f : D_0^k \to \widetilde D_1$ given by
\[f(x_1, \dots, x_k) = [\underbrace{x_1, \dots, x_1}_{2^0}, \underbrace{x_2, \dots, x_2}_{2^1}, \dots, \underbrace{x_k, \dots, x_k}_{2^{k-1}}]\]
is injective. To see this, let $a \in D_0$ be given and consider the number of times $a$ is a member of $f(x_1, \dots, x_k)$; the binary expansion of this number tells us which $x_i$, if any, are equal to $a$.

\textbf{Cumulativity.} We have $D_{n+1} \subseteq \mathcal P(D_n)$ for all $n \geq 0$, where $\mathcal P(\cdot)$ denotes the powerset operation. One could more generally take $D_{n+1} \subseteq \mathcal P\!\left(\bigcup_{i\leq n}D_i\right)$ for $n \geq 0$. This setting, however, requires us to settle on a notion of heterogeneous equality. For instance, one will have to decide whether $D_0$ is disjoint from $D_1 \setminus D_0$. If no member of $D_0$ is a set, then this statement holds; but if some members of $D_0$ are sets, and especially if non-well-founded sets are admitted, then this may fail to be the case.

\printbibliography

\bigskip

\textbf{Technology disclosure.} This work was partially carried out with the use of text generation models, specifically in suggesting the proof strategies for Theorems~\ref{thm:diag} and~\ref{thm:unb} and in connection with coding (see footnote~\ref{fn} above). The author takes full responsibility for the contents of this paper.

\end{document}